\documentclass{article}

\newcommand{\paperTitle}{Operator Splitting Methods with Online Scaling}
\newcommand{\paperAbstract}{This paper develops a principled framework for automatically tuning preconditioners in operator splitting methods, including forward-backward splitting, Douglas-Rachford splitting, and the alternating direction method of multipliers. We interpret these splitting methods as preconditioned gradient descent applied to their corresponding envelope functions and use online learning to adaptively learn the preconditioner during optimization. Our framework achieves asymptotically faster convergence rates and accelerates the solution of important optimization problems, including Lasso, $\ell_1$-regularized logistic regression, and quadratic programming. Extensive numerical experiments validate our theoretical results and demonstrate the practical effectiveness of the proposed approach.
}

\usepackage[utf8]{inputenc}
\usepackage[T1]{fontenc}
\usepackage[english]{babel}
\usepackage{amsmath,amssymb,graphicx,xcolor,tabularx,booktabs,multirow}
\usepackage{amsthm}
\usepackage[shortlabels]{enumitem}
\usepackage[ruled,linesnumbered]{algorithm2e}
\SetKwInput{KwInit}{Init}
\SetKwInput{KwOutput}{Output}
\usepackage{subcaption}
\usepackage{placeins}
\usepackage{aliascnt}
\usepackage{hyperref}
\usepackage{geometry}
\usepackage{cleveref}
\crefname{appendix}{Appendix}{Appendices}
\Crefname{appendix}{Appendix}{Appendices}

\graphicspath{{figs/}}

\makeatletter
\newcommand{\appendixtableofcontents}{\noindent{\huge\bfseries Appendix\par}\vspace{1.5em}\noindent{\Large\bfseries Table of Contents\par}\vspace{0.25em}\hrule
  \begingroup
  \setlength{\parskip}{0pt}\setcounter{tocdepth}{2}\@starttoc{atc}\endgroup
  \hrule
}
\newcommand{\appendixsection}[1]{\section{#1}\addcontentsline{atc}{section}{\protect\numberline{\thesection}#1}}
\newcommand{\appendixsubsection}[1]{\subsection{#1}\addcontentsline{atc}{subsection}{\protect\numberline{\thesubsection}#1}}
\makeatother

\setlist[itemize]{leftmargin=1.4em,itemsep=2pt,topsep=2pt}
\setlist[enumerate]{leftmargin=1.6em,itemsep=2pt,topsep=2pt}

\theoremstyle{plain}
  
  \newaliascnt{lemma}{theorem}
  \newtheorem{lemma}[lemma]{Lemma}
  \aliascntresetthe{lemma}
  \newaliascnt{corollary}{theorem}
  
  \aliascntresetthe{corollary}
  \newaliascnt{proposition}{theorem}
  
  \aliascntresetthe{proposition}
  \theoremstyle{definition}
  \newaliascnt{definition}{theorem}
  
  \aliascntresetthe{definition}
  \newaliascnt{assumption}{theorem}
  
  \aliascntresetthe{assumption}
  \newaliascnt{example}{theorem}
  
  \aliascntresetthe{example}
  \newaliascnt{remark}{theorem}
  \newtheorem{remark}[remark]{Remark}
  \aliascntresetthe{remark}

\crefname{theorem}{theorem}{theorems}
\Crefname{theorem}{Theorem}{Theorems}
\crefname{lemma}{lemma}{lemmas}
\Crefname{lemma}{Lemma}{Lemmas}
\crefname{corollary}{corollary}{corollaries}
\Crefname{corollary}{Corollary}{Corollaries}
\crefname{proposition}{proposition}{propositions}
\Crefname{proposition}{Proposition}{Propositions}
\crefname{definition}{definition}{definitions}
\Crefname{definition}{Definition}{Definitions}
\crefname{assumption}{assumption}{assumptions}
\Crefname{assumption}{Assumption}{Assumptions}
\crefname{example}{example}{examples}
\Crefname{example}{Example}{Examples}
\crefname{remark}{remark}{remarks}
\Crefname{remark}{Remark}{Remarks}
\newcommand{\nonconverted}[1]{\mbox{}}

\newcommand{\assign}{:=}
\newcommand{\backassign}{=:}
\newcommand{\ie}{\emph{i.e.}}

\newcommand{\tmop}[1]{\ensuremath{\operatorname{#1}}}

\newcommand{\tmtextit}[1]{\text{{\itshape{#1}}}}

\begin{document}

\date{}
\title{\paperTitle}
\author{Wanyu Zhang\footnotemark[1]\qquad 
Wenzhi Gao\footnotemark[1] \qquad
Madeleine Udell\footnotemark[1]
}

\renewcommand{\thefootnote}{\fnsymbol{footnote}}
\footnotetext[1]{Stanford University. \texttt{\{zwanyu,gwz,udell\}@stanford.edu}}
\renewcommand{\thefootnote}{\arabic{footnote}}
\maketitle

\begin{abstract}
  \paperAbstract
\end{abstract}

\input{\paperrootsec_introduction}
\input{\paperrootsec_splitting_envelopes}
\input{\paperrootsec_osop}
\input{\paperrootsec_os_admm}
\input{\paperrootsec_analysis}
\input{\paperrootsec_experiments}
\input{\paperrootsec_conclusion}

\bibliographystyle{plain}
\bibliography{ref}

\newpage
\appendix
\addcontentsline{toc}{section}{Appendix}
\appendixtableofcontents

\crefalias{section}{appendix}
\crefalias{subsection}{appendix}
\appendixsection{Proof of DRS--ADMM translation
(\Cref{lem:drs-admm})}\label{app:drs-admm}

\begin{proof}
  Let $\nu$ and $c$ be any vectors, and suppose
  \[ \tilde{z} \in \arg \min_z \left\{ \psi (z) + \langle \nu, B z \rangle
     + \tfrac{\gamma}{2} \| c + B z - b \|^2 \right\}, \qquad
     \tilde{\nu} = \nu + \gamma (c + B \tilde{z} - b) . \]
  The first-order condition of the $\tilde{z}$-subproblem and the definition of
  $\lambda$ give
  \begin{equation}
    - B^{\top} \tilde{\nu} \in \partial \psi (\tilde{z}), \qquad
    \lambda (\tilde{z}, \tilde{\nu}) = \tilde{\nu} - \gamma B \tilde{z}
    = \nu + \gamma (c - b). \label{eq:admm-z-update}
  \end{equation}
  The initial pair satisfies $- B^{\top} \nu^0 \in \partial \psi (z^0)$ by
  \Cref{alg:os-admm}. Formula~\eqref{eq:admm-z-update} shows that each probe
  pair satisfies the same inclusion. Since the null step retains either the
  current or the probe pair, induction gives
  \[ - B^{\top} \nu \in \partial \psi (z) \]
  for every current and probe pair in the proposition.

  Fix either pair $(z, \nu)$ and abbreviate $\lambda = \lambda (z, \nu)$,
  $x^+ = x^+ (z, \nu)$, and $r_x = r_x (z, \nu)$. The corresponding
  $x$-subproblem attains the minimizer $x^+$ by assumption. Thus $\lambda = \nu
  - \gamma B z$ and $r_x = A x^+ + B z - b$.

  We first prove parts (1) and (2).
  For DRS, write 
  \( p (\lambda) \assign \tmop{prox}_{\gamma f} (\lambda), 
     q (\lambda) \assign \tmop{prox}_{\gamma g} (2 p (\lambda) - \lambda), \)
  and $T (\lambda) = \lambda + q (\lambda) - p (\lambda)$. We use the
  conjugate chain-rule inclusions $- B \partial \psi^{\ast} (- B^{\top} u)
  \subseteq \partial f (u)$ and $- A \partial \phi^{\ast} (- A^{\top} u) + b
  \subseteq \partial g (u)$, which hold without a qualification condition.

  The state inclusion is equivalent to $z \in \partial \psi^{\ast} (-
  B^{\top} \nu)$. Hence
  \[ \tfrac{1}{\gamma} (\lambda - \nu) = - B z \in - B \partial
     \psi^{\ast} (- B^{\top} \nu) \subseteq \partial f (\nu), \]
  which is the proximal optimality condition for $p (\lambda) = \nu$. The
  $x$-subproblem gives $- A^{\top} (\nu + \gamma r_x) \in \partial \phi
  (x^+)$, or equivalently $x^+ \in \partial \phi^{\ast} (- A^{\top} (\nu
  + \gamma r_x))$. Using $p (\lambda) = \nu$ and $r_x = A x^+ + B z - b$,
  \[ \tfrac{1}{\gamma} \big(2 p (\lambda) - \lambda - (\nu + \gamma r_x)\big)
     = B z - r_x = b - A x^+ \in \partial g (\nu + \gamma r_x). \]
  This proximal optimality condition gives
  \begin{equation}
    p (\lambda) = \nu, \qquad q (\lambda) = \nu + \gamma r_x, \qquad
    \lambda - T (\lambda) = p (\lambda) - q (\lambda) = - \gamma r_x.
    \label{eq:admm-prox-identities}
  \end{equation}
  The last identity at $(z^k, \nu^k)$ proves Part (1). Applying
  formula~\eqref{eq:admm-z-update} with $c = c_{P_k} = b - B z^k + P_k r_x^k$
  and then using~\eqref{eq:admm-prox-identities} gives
  \[ \lambda (z^{k + 1 / 2}, \nu^{k + 1 / 2})
     = \nu^k + \gamma (c_{P_k} - b)
     = \lambda^k + \gamma P_k r_x^k
     = \lambda^k - P_k (\lambda^k - T (\lambda^k)), \]
  which proves Part (2).

  We then prove parts (3) and (4). Strong convexity and twice differentiability
  of $\psi$ imply that $\psi^{\ast}$, and hence $f$, is twice differentiable.
  The state inclusion gives $- B^{\top} \nu = \nabla \psi (z)$, so the
  inverse-function identity and~\eqref{eq:admm-prox-identities} yield
  \[ \nabla p (\lambda)
     = (I + \gamma \nabla^2 f (p (\lambda)))^{- 1}
     = (I + \gamma B [\nabla^2 \psi (z)]^{- 1} B^{\top})^{- 1}
     \backassign J_z. \]
  Substituting this formula and~\eqref{eq:admm-prox-identities} into the DRE
  gradient identity~\eqref{eq:dre} gives
  \[ \nabla F_{\gamma}^{\tmop{ADMM}} (\lambda)
     = \tfrac{1}{\gamma} (2 J_z - I) (p (\lambda) - q (\lambda))
     = - (2 J_z - I) r_x, \]
  which proves Part (3).

  For the envelope value, the same proximal identities give $\nabla
  f_{\gamma} (\lambda) = \tfrac{1}{\gamma} (\lambda - p (\lambda)) = - B z$
  and $q (\lambda) - p (\lambda) = \gamma r_x$. Fenchel--Young equality gives
  \[ \begin{aligned}
       f (\nu) &= - \langle \nu, B z \rangle - \psi (z), \\
       g (\nu + \gamma r_x)
       &= - \phi (x^+) - \langle \nu + \gamma r_x, A x^+ - b \rangle .
     \end{aligned} \]
  Substituting these expressions into the DRE value formula~\eqref{eq:dre} and
  using $A x^+ + B z - b = r_x$ gives
  \[ \begin{aligned}
       F_{\gamma}^{\tmop{ADMM}} (\lambda)
       &= - \phi (x^+) - \psi (z)
          - \langle \nu, A x^+ + B z - b \rangle \\
       &\quad {} - \gamma \langle r_x, A x^+ + B z - b \rangle
          + \tfrac{\gamma}{2} \| r_x \|^2 \\
       &= - \phi (x^+) - \psi (z) - \langle \nu, r_x \rangle
          - \tfrac{\gamma}{2} \| r_x \|^2 \\
       &= - \mathcal{L}_{\gamma} (x^+, z, \nu),
     \end{aligned} \]
  which proves Part (4).
\end{proof}

\appendixsection{Envelope smoothness and convexity}\label{app:envelope-proofs}

\appendixsubsection{Proof of \Cref{lem:s-env}}\label{app:proof-s-env}

\begin{proof}
  Suppose first that the smooth term $f$ satisfies the conditions in
  \Cref{ass:smooth}. Since $f$ is real-valued and convex on $\mathbb{R}^n$, it is proper,
  closed, and convex, so the Moreau-envelope facts of \Cref{subsec:moreau}
  apply: $f_{\gamma} : \mathbb{R}^n \to \mathbb{R}$ is convex and
  continuously differentiable with
  \begin{equation}
    \nabla f_{\gamma} (z) = \tfrac{1}{\gamma} (z - \tmop{prox}_{\gamma f}
    (z)) = \nabla f (\tmop{prox}_{\gamma f} (z)) . \label{eq:senv-grad}
  \end{equation}
  We first show that $f_{\gamma}$ is twice differentiable on
  $\mathbb{R}^n$. The map $F \assign \mathrm{id} + \gamma \nabla f$ is
  differentiable with nonsingular Jacobian $I + \gamma \nabla^2 f (x)
  \succeq I$, since convexity of $f$ gives $\nabla^2 f \succeq 0$. Moreover,
  $F$ is a bijection of $\mathbb{R}^n$: for every $z$, the point
  $\tmop{prox}_{\gamma f} (z)$ solves $F (x) = z$, and the strong monotonicity
  $\langle F (x) - F (y), x - y \rangle \geq \| x - y \|^2$ rules out
  further solutions and makes $F^{-1} = \tmop{prox}_{\gamma f}$
  $1$-Lipschitz. Fix $z$ and write $x_z \assign F^{-1} (z)$. For an increment
  $h$, let $k \assign F^{-1} (z + h) - x_z$. Then $\| k \| \leq \| h \|$,
  and differentiability of $F$ at $x_z$ gives
  \[ h = (I + \gamma \nabla^2 f (x_z)) k + o (\| k \|) . \]
  Therefore,
  \[ k = (I + \gamma \nabla^2 f (x_z))^{-1} h + o (\| h \|), \]
  so $F^{-1}$ is differentiable at $z$ with Jacobian $(I + \gamma \nabla^2 f
  (x_z))^{-1}$. Since $z$ is arbitrary, differentiating~\eqref{eq:senv-grad}
  shows that $f_{\gamma}$ is twice differentiable on $\mathbb{R}^n$, with
  \[ \nabla^2 f_{\gamma} (z) = \tfrac{1}{\gamma} (I - (I + \gamma \nabla^2 f
     (x_z))^{-1}) = A (\nabla^2 f (x_z)), \qquad A (M) \assign M (I + \gamma
     M)^{-1} . \]
  It remains to verify the stated bounds on $\Omega$,
  so fix $z, w \in \Omega$. The hypothesis $\tmop{prox}_{\gamma f}
  (\Omega) \subseteq \Omega_f$ places $x_z, x_w \in \Omega_f$.

  \tmtextit{Gradient bound.} By \eqref{eq:senv-grad} and $x_z \in \Omega_f$,
  $\| \nabla f_{\gamma} (z) \| = \| \nabla f (x_z) \| \leq D = \tilde{D}$.

  \tmtextit{Smoothness and convexity.} The hypothesis $\mu I \preceq \nabla^2 f (x)
  \preceq L I$ for $x \in \Omega_f$ combines with the spectral map $\lambda \mapsto
  \lambda / (1 + \gamma \lambda)$ of $A$, increasing on $[0, \infty)$, to give
  \[ \tilde{\mu} I \preceq \nabla^2 f_{\gamma} (z) \preceq \tilde{L} I,
     \qquad z \in \Omega . \]
  Since $\Omega$ is convex and $f_{\gamma}$ is twice differentiable, these
  Hessian bounds imply that $f_{\gamma}$ is $\tilde{L}$-smooth and
  $\tilde{\mu}$-convex on $\Omega$.

  \tmtextit{Hessian Lipschitzness.} For $M, N \succeq \mu I$, the resolvent
  identity
  \[ A (M) - A (N) = (I + \gamma M)^{- 1} (M - N) (I + \gamma N)^{- 1} \]
  and $\| (I + \gamma M)^{- 1} \| \leq \tfrac{1}{1 + \gamma \mu}$ give $\| A
  (M) - A (N) \| \leq \tfrac{1}{(1 + \gamma \mu)^2} \| M - N \|$. Moreover,
  integrating $\nabla^2 f \succeq \mu I$ along the segment
  $[x_w, x_z] \subseteq \Omega_f$ gives $\langle \nabla f (x_z) - \nabla f (x_w), x_z - x_w \rangle
  \geq \mu \| x_z - x_w \|^2$, hence
  \[ (1 + \gamma \mu) \| x_z - x_w \|^2 \leq \langle F (x_z) - F (x_w), x_z
     - x_w \rangle = \langle z - w, x_z - x_w \rangle \leq \| z - w \| \|
     x_z - x_w \|, \]
  {\ie}, $\| x_z - x_w \| \leq \tfrac{1}{1 + \gamma \mu} \| z - w \|$.
  Combining the two displays with the $H$-Lipschitz Hessian of $f$ on
  $\Omega_f$,
  \[ \| \nabla^2 f_{\gamma} (z) - \nabla^2 f_{\gamma} (w) \| \leq \tfrac{1}{(1
     + \gamma \mu)^2} \| \nabla^2 f (x_z) - \nabla^2 f (x_w) \| \leq
     \tfrac{H}{(1 + \gamma \mu)^2} \| x_z - x_w \| \leq \tilde{H} \| z - w \|
     . \]

  Suppose now that \Cref{ass:quad} holds. Then $f_{\gamma}$ is quadratic with Hessian
  $Q (I + \gamma Q)^{-1}$. Its spectrum lies in
  $[\tilde{\mu},\tilde{L}]$, and its Hessian is constant, which gives the
  stated global bounds with $\tilde{H}=0$.
\end{proof}

\appendixsubsection{Proof of \Cref{lem:env-template}}\label{app:proof-env-template}

The main-text calculation uses the pointwise matrix
$\nabla^2 g_{\gamma} (M (x))$ and the third derivative of $s$, neither of
which need exist under the assumptions of \Cref{lem:env-template}. We
replace them by finite differences along a partition of the segment joining
$x$ and $y$. The following lemma constructs the required finite secants of
$\nabla g_{\gamma}$.

\begin{lemma}
  \label{lem:secant}Let $h : \mathbb{R}^n \to \mathbb{R}$ be convex and
  $L_h$-smooth. For any $u, v \in \mathbb{R}^n$, there exists a symmetric
  matrix $B_{u v}$ with $0 \preceq B_{u v} \preceq L_h I$ such that
  \[ \nabla h (u) - \nabla h (v) = B_{u v} (u - v) . \]
\end{lemma}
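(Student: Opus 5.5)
Set $d \assign u - v$ and $g \assign \nabla h(u) - \nabla h(v)$. If $d = 0$, then $g = 0$ and $B_{uv} = 0$ works, so assume $d \neq 0$. The standard tool is co-coercivity of gradients of convex $L_h$-smooth functions (the Baillon--Haddad theorem):
\[ \langle g, d \rangle \geq \tfrac{1}{L_h} \| g \|^2 . \]
In particular $a \assign \langle g, d\rangle \ge 0$, and $a = 0$ forces $g = 0$, in which case $B_{uv} = 0$ again works. So also assume $a > 0$.

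The plan is to use an explicit symmetric rank-two construction modeled on the BFGS/secant update:
\[ B_{uv} \assign \frac{g g^{\top}}{\langle g, d\rangle} + c\Big(I - \frac{d d^{\top}}{\|d\|^2}\Big)\cdot 0 , \]
and in fact $B \assign g g^\top / a$ already suffices. It is symmetric and positive semidefinite. It satisfies the secant equation, since $B d = g \,\langle g, d\rangle / a = g$. Its only nonzero eigenvalue is $\|g\|^2 / a$, which co-coercivity bounds by $L_h$. Hence $0 \preceq B \preceq L_h I$, as required.

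The main step is therefore co-coercivity, which I take as the standard consequence of convexity plus $L_h$-smoothness. Recall its proof: apply the descent lemma to the convex function $\varphi(w) = h(w) - \langle \nabla h(v), w\rangle$, which is minimized at $v$ and still $L_h$-smooth. This gives $\varphi(v) \le \varphi(u) - \tfrac{1}{2L_h}\|\nabla\varphi(u)\|^2$. Symmetrizing in $u, v$ and adding the two inequalities yields the bound.

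The only remaining subtlety is the degenerate cases $d = 0$ or $a = 0$. Both are handled by co-coercivity, which gives $g = 0$, and the zero matrix then works. No twice-differentiability is needed, which is exactly the point of the lemma in this appendix.
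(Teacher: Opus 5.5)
Your proof is correct and is essentially the paper's argument. In both, Baillon--Haddad cocoercivity $\|g\|^2 \le L_h \langle g, d\rangle$ handles the degenerate case $g = 0$ with $B_{uv} = 0$. Otherwise the rank-one matrix $g g^{\top}/\langle g, d\rangle$ is symmetric, positive semidefinite, satisfies the secant equation, and has its only nonzero eigenvalue $\|g\|^2/\langle g, d\rangle \le L_h$. The vestigial term $c\,(I - d d^{\top}/\|d\|^2)\cdot 0$ in your displayed definition does nothing and should be dropped.
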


\begin{proof}
  Convexity and $L_h$-smoothness give the Baillon-Haddad cocoercivity
  \[ \| \nabla h (u) - \nabla h (v) \|^2 \leq L_h \langle \nabla
     h (u) - \nabla h (v), u - v \rangle . \]
  Let $w = u - v$ and $\Delta = \nabla h (u) - \nabla h (v)$.
  If $\Delta = 0$, take $B_{u v} = 0$. Otherwise, cocoercivity gives
  $\langle \Delta, w \rangle > 0$, so set
  \[ B_{u v} = \tfrac{1}{\langle \Delta, w \rangle} \Delta \Delta^{\top} . \]
  Then $B_{u v} w = \Delta$. The matrix $B_{u v}$ is positive semidefinite,
  and its only nonzero eigenvalue is $\| \Delta \|^2 / \langle \Delta, w
  \rangle \leq L_h$ by cocoercivity.
\end{proof}

We now prove \Cref{lem:env-template}.
We first work under the first set of assumptions. The second set of assumptions, corresponding to the quadratic case, is a special case with $H(D+G)$ replaced by $0$.

\textbf{(1)} We first prove the smoothness constant. By
\Cref{subsec:moreau}, $g_{\gamma}$ is convex and $L_h$-smooth with $L_h = 1
  / \gamma$. Proximal optimality gives $\nabla g_{\gamma} (z) \in \partial g
(\tmop{prox}_{\gamma g} (z))$, so the $G$-Lipschitz continuity of $g$ gives
$\|\nabla g_{\gamma} (z)\| \le G$. Since $s$ is twice differentiable and
$g_{\gamma}$ is differentiable, differentiating~\eqref{eq:env-template} gives
\begin{equation}
  \nabla E (x) = (I - \alpha \nabla^2 s (x)) (\nabla s (x) + \nabla
  g_{\gamma} (M (x))) . \label{eq:env-grad}
\end{equation}
Fix $x, y \in \Omega$ and a partition
$0 = t_0 < t_1 < \cdots < t_m = 1$. Set
\[ x_i = x + t_i (y - x), \quad \delta_i = x_{i + 1} - x_i, \quad d = y - x.
\]
Since $\Omega$ is convex, each $x_i \in \Omega$, so the Hessian
and gradient bounds of case 1 of \Cref{lem:env-template} apply at every $x_i$. Let $S_i = \nabla^2 s (x_i)$, $J_i = I - \alpha S_i$, $v_i = \nabla s
(x_i)$, $u_i = M (x_i)$, and $a_i = \nabla g_{\gamma} (u_i)$. Since $\alpha L
\leq 1$, we have $0 \preceq J_i \preceq I$. Telescoping \eqref{eq:env-grad},
\begin{equation}
  \nabla E (y) - \nabla E (x) = \sum_i [J_i ((v_{i + 1} - v_i) + (a_{i + 1} -
  a_i)) + (J_{i + 1} - J_i) (v_{i + 1} + a_{i + 1})] . \label{eq:telescope}
\end{equation}
The second summand accounts for Hessian variation.
Since $J_{i + 1} - J_i = - \alpha (S_{i + 1} - S_i)$ and $\| S_{i + 1} - S_i
\| \leq H \| \delta_i \|$, while $\| v_{i + 1} + a_{i + 1} \| \leq D + G$,
\begin{equation}
  \sum_i \| (J_{i + 1} - J_i) (v_{i + 1} + a_{i + 1}) \| \leq \alpha H (D + G)
  \sum_i \| \delta_i \| = \alpha H (D + G) \| d \| . \label{eq:hl-remainder}
\end{equation}
We next recover the curvature obtained by fixing the Hessian at each $x_i$.
By Hessian Lipschitzness applied to $\nabla s$, there exists $e_i$ with
$\| e_i \| \leq (H / 2) \| \delta_i \|^2$ such that
\[ v_{i + 1} - v_i = S_i \delta_i + e_i . \]
Hence $u_{i + 1} - u_i = \delta_i - \alpha (v_{i + 1} - v_i) = J_i \delta_i -
\alpha e_i$. By \Cref{lem:secant} applied to $g_{\gamma}$ at $u_{i + 1},
u_i$, there exists $B_i$ with $0 \preceq B_i \preceq L_h I$ such that
\[ a_{i + 1} - a_i = B_i (u_{i + 1} - u_i) = B_i J_i \delta_i - \alpha B_i e_i
   . \]
Combining these identities and multiplying by $J_i$, we obtain
\begin{equation}
  J_i ((v_{i + 1} - v_i) + (a_{i + 1} - a_i)) = A_i \delta_i + J_i (I - \alpha
  B_i) e_i , \label{eq:principal}
\end{equation}
where $A_i \assign J_i S_i + J_i B_i J_i$.
The matrix $A_i$ is the finite-difference counterpart of $JS + JBJ$ in the
main-text calculation.
Since $S_i$ and $J_i$ commute and $0 \preceq J_i B_i J_i \preceq L_h J_i^2$,
\[ A_i \preceq S_i J_i + L_h J_i^2, \]
whose eigenvalues are $\lambda (1 - \alpha \lambda) + L_h (1 - \alpha
\lambda)^2$ for $\lambda \in [\mu, L]$. The $e_i$-error in
\eqref{eq:principal} satisfies
  \[ \begin{aligned}
       \sum_i \| J_i (I - \alpha B_i) e_i \|
       &\leq (1 + \alpha L_h) \tfrac{H}{2} \sum_i \| \delta_i \|^2\\
       &= (1 + \alpha L_h) \tfrac{H}{2} \| d \|^2
       \sum_i (t_{i + 1} - t_i)^2\\
       &\leq (1 + \alpha L_h) \tfrac{H}{2} \| d \|^2
     \max_i (t_{i + 1} - t_i).
   \end{aligned} \]
The last quantity vanishes as the partition mesh goes to zero. Combining
\eqref{eq:telescope}, \eqref{eq:hl-remainder}, and \eqref{eq:principal}, and
passing to the limit,
\[ \| \nabla E (y) - \nabla E (x) \| \leq (\sup_{\lambda \in [\mu, L]}
   [\lambda (1 - \alpha \lambda) + L_h (1 - \alpha \lambda)^2] + \alpha H (D
   + G)) \| d \| . \]
\textbf{(2)} We next prove the convexity constant. Take inner products of \eqref{eq:telescope} with $d$. Since $J_i B_i J_i
\succeq 0$, the principal operator satisfies $A_i \succeq J_i S_i$, whose
eigenvalues are $\lambda (1 - \alpha \lambda)$. Thus
\[ \langle A_i \delta_i, d \rangle \geq (\inf_{\lambda \in [\mu, L]} \lambda
   (1 - \alpha \lambda)) \langle \delta_i, d \rangle = (\inf_{\lambda \in
   [\mu, L]} \lambda (1 - \alpha \lambda)) (t_{i + 1} - t_i) \| d \|^2 . \]
Summing and adding the Hessian-Lipschitz contribution, which is at worst $-
\alpha H (D + G) \| d \|^2$ by \eqref{eq:hl-remainder} and Cauchy-Schwarz, and
passing to the limit,
\[ \langle \nabla E (y) - \nabla E (x), y - x \rangle \geq (\inf_{\lambda \in
   [\mu, L]} \lambda (1 - \alpha \lambda) - \alpha H (D + G)) \| y - x \|^2 .
\]
This inequality is the $\mu_E$-convexity bound and completes the proof.

In the quadratic case,
the Hessian $S \assign \nabla^2 s$ is constant, so in the
telescoping argument $S_i \equiv S$, $J_i \equiv I - \alpha S$, and
$v_{i + 1} - v_i = S \delta_i$ exactly. Hence the
remainder~\eqref{eq:hl-remainder} and every error $e_i$ vanish, and the
bounds $\| \nabla s \| \leq D$ and $\| \nabla g_{\gamma} \| \leq G$ are
never invoked. By \Cref{subsec:moreau}, $g_{\gamma}$ is convex and
$\tfrac{1}{\gamma}$-smooth for every proper, closed, and convex $g$, so
\Cref{lem:secant} applies to $g_{\gamma}$ as before. The argument above
therefore runs unchanged and yields the constants of
\Cref{lem:env-template} with $H (D + G)$ replaced by $0$, which completes
the proof.

\appendixsubsection{Proof of \Cref{prop:quad}}\label{app:proof-envelope-tightness}

Throughout the proof, $\sigma (A)$ denotes the set of eigenvalues of a
symmetric matrix $A$.

We first derive a common Hessian formula for both the FBE and DRE.
In either instantiation of~\eqref{eq:env-template}, the smooth block $s$
is quadratic: for the FBE, $s = f$ with Hessian $Q$, and for the DRE, $s =
f_{\gamma}$ with Hessian $\tilde{Q} = Q (I + \gamma Q)^{- 1}$.
Hence $S \assign \nabla^2 s$ is constant.
Whenever $g_\gamma$ is twice differentiable, differentiating the uniform envelope template $E$ in~\eqref{eq:env-template} gives
\[
  \nabla^2 E = J S + J \nabla^2 g_\gamma(M(x)) J,
  \qquad
  J \assign I-\alpha S.
\]
For the FBE, $(S,J)=(Q,I-\gamma Q)$. For the DRE,
$(S,J)=(\tilde{Q},I-2\gamma\tilde{Q})$.

The convexity constants are attained by $g=0$. Then
$\nabla^2g_\gamma=0$. Substitution into the Hessian identity gives
\[
  \sigma(\nabla^2 F_\gamma^{\mathrm{FB}})
  = \left\{ \lambda(1-\gamma\lambda):
    \lambda\in\sigma(Q) \right\},
  \qquad
  \sigma(\nabla^2 F_\gamma^{\mathrm{DR}})
  = \left\{ \tfrac{\lambda(1-\gamma\lambda)}
    {(1+\gamma\lambda)^2}:
    \lambda\in\sigma(Q) \right\}.
\]
The derivatives of the two scalar functions are
$1-2\gamma\lambda$ and
$(1-3\gamma\lambda)/(1+\gamma\lambda)^3$, respectively.
Thus neither function has an interior minimum on
$[0,1/\gamma]$, so their minima over $[\mu, L]$ are attained at an
endpoint. Both endpoints lie in $\sigma (Q)$, so the minima over
$\sigma (Q)$ coincide with the minima over $[\mu, L]$ and equal
$\mu_\gamma^{\mathrm{FB}}$ and $\mu_\gamma^{\mathrm{DR}}$.

The smoothness constants are attained by $g=\delta_{\{0\}}$. Then
$g_\gamma(z)=\tfrac{1}{2\gamma}\|z\|^2$ and
$\nabla^2g_\gamma=\tfrac{1}{\gamma}I$. Substitution into the Hessian identity gives
\[
  \sigma(\nabla^2 F_\gamma^{\mathrm{FB}})
  = \left\{ \tfrac{1-\gamma\lambda}{\gamma}:
    \lambda\in\sigma(Q) \right\},
  \qquad
  \sigma(\nabla^2 F_\gamma^{\mathrm{DR}})
  = \left\{ \tfrac{1-\gamma\lambda}
    {\gamma(1+\gamma\lambda)^2}:
    \lambda\in\sigma(Q) \right\}.
\]
The two scalar functions are decreasing on $[0,1/\gamma]$.
Since $\mu \in \sigma (Q)$, their maxima over $\sigma (Q)$ are
attained at $\mu$ and equal
$L_\gamma^{\mathrm{FB}}$ and $L_\gamma^{\mathrm{DR}}$, respectively.

\appendixsubsection{Proof of \Cref{lem:admm-regularity}}\label{app:proof-admm-regularity}

  Suppose first that \Cref{ass:admm} holds. We verify \Cref{ass:smooth}
  for the dual pair: $f$ satisfies the smooth-term
  conditions on $\Lambda$ with the stated constants $(\mu, L, H, D)$,
  and $g$ is convex and Lipschitz continuous with the stated constant
  $G$.

  Since $\psi$ is $L_\psi$-smooth and $\mu_\psi$-strongly convex, the conjugate
  $\psi^{\ast}$ is $\tfrac{1}{\mu_\psi}$-smooth and $\tfrac{1}{L_\psi}$-strongly
  convex. Composing $\psi^{\ast}$ with the linear map $\lambda \mapsto -
  B^{\top} \lambda$ shows that $f$ is $L$-smooth and $\mu$-convex, with
  $L = \tfrac{\| B \|^2}{\mu_\psi}$ and $\mu =
  \tfrac{\lambda_{\min} (B B^{\top})}{L_\psi}$.
  For $y_i \in \mathbb{R}^{n_2}$, let $z_i \assign \nabla
  \psi^{\ast} (y_i)$. The inverse-Hessian identity, the
  $H_\psi$-Lipschitz continuity of $\nabla^2 \psi$, and the
  $\mu_\psi$-strong convexity of $\psi$ give
  \[ 
       \| \nabla^2 \psi^{\ast} (y_1) - \nabla^2 \psi^{\ast} (y_2) \|
       \leq \tfrac{H_\psi}{\mu_\psi^2} \| z_1 - z_2 \|
       \leq \tfrac{H_\psi}{\mu_\psi^3} \| y_1 - y_2 \|,
    \]
  where the second inequality uses the $\tfrac{1}{\mu_\psi}$-Lipschitz
  continuity of $\nabla \psi^{\ast}$. Since $\nabla^2 f (\lambda) = B
  \nabla^2 \psi^{\ast} (- B^{\top} \lambda) B^{\top}$,
  \[ \| \nabla^2 f (\lambda_1) - \nabla^2 f (\lambda_2) \|
     \leq \tfrac{H_\psi \| B \|^3}{\mu_\psi^3} \| \lambda_1 -
     \lambda_2 \| . \]
  Therefore, the Hessian of $f$ is Lipschitz with constant
  $H_f \leq \tfrac{H_\psi \| B \|^3}{\mu_\psi^3} = H$. The gradient
  bound of \Cref{ass:admm} gives $\| \nabla f (\lambda) \| = \| B
  \nabla \psi^{\ast} (- B^{\top} \lambda) \| \leq D$ on $\Lambda$.

  Since $\tmop{dom} \phi \subseteq \{ x : \| x \| \leq R_\phi \}$, the conjugate
  $\phi^{\ast}$ is $R_\phi$-Lipschitz. Hence $g (\lambda) = \phi^{\ast} (-
  A^{\top} \lambda) + b^{\top} \lambda$ is proper, closed, and convex, and
  Lipschitz with constant $G = \| A \| R_\phi + \| b \|$.

  Suppose now that \Cref{ass:admm-quad} holds. The conjugate is the quadratic $\psi^{\ast} (y)
  = \tfrac{1}{2} (y - c)^{\top} Q^{- 1} (y - c)$, so $f$ is a convex
  quadratic with Hessian $B Q^{- 1} B^{\top}$, whose spectrum lies in
  $[\mu, L]$. The proximable block $g$ is closed and convex since
  $\phi^{\ast}$ is closed and convex. Properness of $g$ follows from the
  subproblem condition: fix any $(z, \nu)$, let $x^+$ minimize the
  $x$-subproblem, and set $\bar{\lambda} \assign \nu + \gamma (A x^+ + B
  z - b)$. The optimality condition of the subproblem gives $- A^{\top}
  \bar{\lambda} \in \partial \phi (x^+)$. By the Fenchel--Young
  equality,
  \[ \phi^{\ast} (- A^{\top} \bar{\lambda}) = \langle - A^{\top}
     \bar{\lambda}, x^+ \rangle - \phi (x^+) < \infty, \]
  so $g (\bar{\lambda}) < \infty$, and since $\phi^{\ast}$ never takes
  $- \infty$, $g$ is proper.
  Thus \Cref{ass:quad} holds for the dual pair $(f, g)$ with the same
  $\mu$ and $L$. Finally, in both cases, applying
  \Cref{thm:envelope-regularity} in the DR case with these constants
  yields the stated conclusions.

\appendixsection{Theoretical analysis of OSOP}\label{app:osop-proofs}

\appendixsubsection{Proof of \Cref{prop:os-lower}}\label{app:proof-os-lower}
  
  For part 1, combining the residual bound~\eqref{eq:res-grad} with the
  strong convexity of $F_{\gamma}$ gives
  \[ \| x - T (x) \|^2 \geq \gamma^2 \| \nabla F_{\gamma} (x) \|^2 \geq
     2 \gamma^2 \mu_{\gamma}  (F_{\gamma} (x) - F^{\star}), \quad
     \forall x \in \mathcal{L}_0 . \]
  Comparing with the definition of the residual PL
  constant~\eqref{eq:residual-pl} yields $\tilde{\mu}_{\gamma} \geq
  \gamma^2 \mu_{\gamma}$.

  For part 2, we first treat the FBE. Set $y = T^{\mathrm{FB}} (x)$ and $d = x - y$. The computable
  value formula~\eqref{eq:fbe-computable-value} and the quadratic identity
  for $f$ give
  \[ F_{\gamma}^{\mathrm{FB}} (x) = F (y) + \tfrac{1}{2 \gamma} d^{\top} (I -
     \gamma Q) d. \]
  The proximal optimality condition gives $\tfrac{1}{\gamma} (I - \gamma Q)
  d \in \partial F (y)$. For the DRE, set $p = \tmop{prox}_{\gamma f} (x)$,
  $q = \tmop{prox}_{\gamma g} (2 p - x)$, $y = q$, and $d = p - q = x -
  T^{\mathrm{DR}} (x)$. Using~\eqref{eq:dre-computable-value}, the quadratic
  identity for $f$, and the optimality conditions for the two proximal
  steps, we get the same two relations:
  \[ F_{\gamma}^{\mathrm{DR}} (x) = F (y) + \tfrac{1}{2 \gamma} d^{\top} (I -
     \gamma Q) d, \qquad \tfrac{1}{\gamma} (I - \gamma Q) d \in \partial F
     (y). \]

  Thus both cases reduce to the following bound. Let $D_Q \assign I -
  \gamma Q$ and $v \assign \tfrac{1}{\gamma} D_Q d \in \partial F (y)$.
  Since $F$ is $\mu$-strongly convex,
  $F (y) - F^{\star} \leq \tfrac{1}{2 \mu} \| v \|^2$, and therefore
  \[ F_{\gamma} (x) - F^{\star} \leq \tfrac{1}{2 \mu \gamma^2} d^{\top}
     (D_Q^2 + \mu \gamma D_Q) d. \]
  If $\lambda \in [\mu, L]$ and $\delta = 1 - \gamma \lambda$, then
  $\delta \in [0, 1 - \gamma \mu]$ and $\delta + \gamma \mu \leq 1$. Hence
  every eigenvalue of $D_Q^2 + \mu \gamma D_Q$ is bounded by $1 - \gamma
  \mu$, so
  \[ F_{\gamma} (x) - F^{\star} \leq \tfrac{1 - \gamma \mu}{2 \mu \gamma^2}
     \| d \|^2, \]
  which proves the two bounds.

  For part 3, fix $x \in \mathcal{L}_0$ and let $x^{\star}$ denote its projection
  onto $\mathcal{X}^{\star}$. The segment $[x, x^{\star}]$ lies in the
  convex region $\Omega$, on which $F_{\gamma}$ is $L_{\gamma}$-smooth by
  hypothesis, and $\nabla F_{\gamma}$ vanishes on $\mathcal{X}^{\star}$,
  so the descent lemma gives
  \[ F_{\gamma} (x) - F^{\star} \leq \tfrac{L_{\gamma}}{2} \| x -
     x^{\star} \|^2 = \tfrac{L_{\gamma}}{2} \tmop{dist} (x,
     \mathcal{X}^{\star})^2 . \]
  The error bound converts the distance into the residual: $\tmop{dist}
  (x, \mathcal{X}^{\star})^2 \leq \kappa^2 \| x - T (x) \|^2$, so
  $F_{\gamma} (x) - F^{\star} \leq \tfrac{L_{\gamma} \kappa^2}{2} \| x
  - T (x) \|^2$. Comparing with the definition of the residual PL
  constant~\eqref{eq:residual-pl} yields $\tilde{\mu}_{\gamma} \geq
  \tfrac{1}{L_{\gamma} \kappa^2}$.

\appendixsubsection{Proof of \Cref{lem:regret}}\label{app:proof-regret}

  We first show three properties of the feedback: $\ell_x$ is $\min \{
  \mu_{\gamma}, 0 \}$-convex, $\ell_x^{\beta}$ is $(L_{\gamma} +
  \beta)$-smooth, and $\ell_x^{\beta}$ is $G_{\ell}$-Lipschitz on
  $\mathcal{P}$. Write $d = x - T (x)$ and, for $P \in \mathcal{P}$, $u_P
  = x - P d$. By the construction of $\Omega$, $u_P \in \Omega$.
  From~\eqref{eq:osop-feedback}, $\nabla \ell_x (P) = - \nabla F_{\gamma}
  (u_P) d^{\top} / \| d \|^2$.

  (1) Let $P, Q \in \mathcal{P}$. The $\mu_{\gamma}$-convexity of
  $F_{\gamma}$ on $\Omega$ gives
  \[ F_{\gamma} (u_Q) \geq F_{\gamma} (u_P) + \langle \nabla F_{\gamma}
     (u_P), u_Q - u_P \rangle + \tfrac{\mu_{\gamma}}{2} \| u_Q - u_P \|^2 .
  \]
  Subtract $F_{\gamma} (x)$ from both sides and divide by $\| d \|^2$. Since
  $u_Q - u_P = (P - Q) d$, the inner-product term equals $\langle \nabla \ell_x
  (P), Q - P \rangle$, and $\| (P - Q) d \|^2 \leq \| P - Q \|_F^2 \| d
  \|^2$ implies $\tfrac{\mu_{\gamma}}{2} \| (P - Q) d \|^2 / \| d \|^2 \geq
  \tfrac{\min \{ \mu_{\gamma}, 0 \}}{2} \| P - Q \|_F^2$. Hence
  \[ \ell_x (Q) \geq \ell_x (P) + \langle \nabla \ell_x (P), Q - P \rangle +
     \tfrac{\min \{ \mu_{\gamma}, 0 \}}{2} \| Q - P \|_F^2, \]
  so $\ell_x$ is $\min \{ \mu_{\gamma}, 0 \}$-convex, and adding the
  $\beta$-strongly convex ridge with $\beta \geq - \min \{ \mu_{\gamma}, 0
  \}$ makes $\ell_x^{\beta}$ convex.

  (2) Using $\| u v^{\top} \|_F = \| u \| \| v \|$ and the
  $L_{\gamma}$-smoothness of $F_{\gamma}$,
  \[ \| \nabla \ell_x (P) - \nabla \ell_x (Q) \|_F = \tfrac{\| \nabla F_{\gamma}
     (u_P) - \nabla F_{\gamma} (u_Q) \|}{\| d \|} \leq \tfrac{L_{\gamma} \|
     (P - Q) d \|}{\| d \|} \leq L_{\gamma} \| P - Q \|_F, \]
  and the ridge contributes $\beta \| P - Q \|_F$.

  (3) Since $u_I = T (x)$ and the ridge gradient vanishes at $I$, we have
  $\nabla \ell_x^{\beta} (I) = \nabla \ell_x (I) = - \nabla F_{\gamma} (T
  (x)) d^{\top} / \| d \|^2$.
  The segment $[x, T (x)]$ joins two points of the convex set $\Omega$, so the
  $L_{\gamma}$-smoothness of $F_{\gamma}$ and the residual
  bound~\eqref{eq:res-grad} give
  \[ \| \nabla F_{\gamma} (T (x)) \| \leq \| \nabla F_{\gamma} (x) \| +
     L_{\gamma} \| d \| \leq \left( \tfrac{1}{\gamma} + L_{\gamma} \right)
     \| d \|, \]
  and at the anchor
  $\| \nabla \ell_x^{\beta} (I) \|_F = \| \nabla \ell_x (I) \|_F =
  \| \nabla F_{\gamma} (T (x)) \| / \| d \| \leq
  \tfrac{1}{\gamma} + L_{\gamma}$. Combining this with the
  $(L_{\gamma} + \beta)$-smoothness from (2) and
  $\| P - I \|_F \leq \Delta_{\mathcal{P}}$ gives, for every
  $P \in \mathcal{P}$,
  \[
    \| \nabla \ell_x^{\beta} (P) \|_F \leq
    \| \nabla \ell_x^{\beta} (I) \|_F +
    (L_{\gamma} + \beta) \| P - I \|_F \leq G_{\ell},
  \]
  which proves the gradient bound.

  We now bound the regret. Fix $\hat{P} \in \mathcal{P}$ and write $g_k \assign \nabla
  \ell_{x^k}^{\beta} (P_k)$. Nonexpansiveness of the projection gives $\|
  P_{k + 1} - \hat{P} \|_F^2 \leq \| P_k - \hat{P} \|_F^2 - 2 \eta \langle
  g_k, P_k - \hat{P} \rangle + \eta^2 \| g_k \|_F^2$. Summing from $k = 0$
  to $K - 1$, discarding $- \| P_K - \hat{P} \|_F^2 \leq 0$, and using
  convexity of the losses (1) together with the gradient
  bound (3),
  \[
    \begin{aligned}
      \sum_{k = 0}^{K - 1} \left[ \ell_{x^k}^{\beta} (P_k) -
      \ell_{x^k}^{\beta} (\hat{P}) \right]
      &\leq \sum_{k = 0}^{K - 1} \langle g_k, P_k - \hat{P} \rangle \\
      &\leq \tfrac{\| P_0 - \hat{P} \|_F^2}{2 \eta}
      + \tfrac{\eta}{2} \sum_{k = 0}^{K - 1} \| g_k \|_F^2 \\
      &\leq \tfrac{\Delta_{\mathcal{P}}^2}{2 \eta}
      + \tfrac{\eta K G_{\ell}^2}{2}
      = \Delta_{\mathcal{P}} G_{\ell} \sqrt{K} .
    \end{aligned}
  \]
  Since $\hat{P} \in \mathcal{P}$ is arbitrary, the bound holds for the
  maximum defining $R_K$.

\appendixsection{Experimental details}

\appendixsubsection{OS-ADMM for QP}\label{app:os-admm-qp}

We specialize OS-ADMM (\Cref{alg:os-admm}) to the standard-form QP
$\min_x \, \{ \tfrac{1}{2} x^{\top} Q x + q^{\top} x : A x = b, \, x \geq
0 \}$ with $Q \succ 0$, cast as the splitting of \Cref{ex:qp-split}:
\[ \min_{x, z} \quad \underbrace{\delta_{\mathbb{R}_+^n} (x)}_{\phi (x)}
   + \underbrace{\tfrac{1}{2} z^{\top} Q z + q^{\top} z}_{\psi (z)}
   \quad \text{subject to} \quad \left[ \begin{array}{c} 0 \\ I \end{array} \right] x
   + \left[ \begin{array}{c} A \\ - I \end{array} \right] z
   = \left[ \begin{array}{c} b \\ 0 \end{array} \right] . \]
Partition the multiplier $\nu \in \mathbb{R}^{m + n}$ as $\nu = (\nu_1,
\nu_2)$, where $\nu_1$ corresponds to the block $A z = b$ and $\nu_2$ to
$x - z = 0$. The augmented Lagrangian $\mathcal{L}_{\gamma}$ of the
split specializes to
\[ \mathcal{L}_{\gamma}^{\tmop{QP}} (x, z, \nu) =
   \delta_{\mathbb{R}_+^n} (x) + \tfrac{1}{2} z^{\top} Q z + q^{\top} z
   + \nu_1^{\top} (A z - b) + \nu_2^{\top} (x - z)
   + \tfrac{\gamma}{2} \| A z - b \|^2 + \tfrac{\gamma}{2} \| x - z \|^2 . \]

The two subproblems of \Cref{alg:os-admm} have closed forms. The
$x$-update is the projection $x^+ (z, \nu) = \left[ z - \tfrac{1}{\gamma}
\nu_2 \right]_+$. The $z$-update and the hypergradient direction $G^{k +
1}$ are linear solves with the fixed matrix $H_{\gamma} \assign Q + \gamma
(A^{\top} A + I)$, factorized once. \Cref{alg:os-admm-qp} states the
specialized method, matching \Cref{alg:os-admm} step for step with the
preconditioner $P_k \in \mathbb{R}^{(m + n) \times (m + n)}$ acting on the
stacked residual $r_x^k$. In the preconditioner update, the vector $G^{k +
1}$ equals $(2 J_{z^{k + 1 / 2}} - I) \, r_x^{k + 1 / 2}$ of
\Cref{alg:os-admm} by the Woodbury identity, at the cost of one additional
$H_{\gamma}$-solve.

\begin{algorithm}[htbp]
  \DontPrintSemicolon
  \caption{OS-ADMM for QP}
  \label{alg:os-admm-qp}
  \KwInit{$(z^0, \nu^0)$ with $\nu_2^0 - A^{\top} \nu_1^0 = Q z^0 + q$;
  penalty $\gamma > 0$; compact
  convex $\mathcal{P} \ni I$; $P_0 = I$; online stepsize $\eta > 0$;
  regularization $\beta = \max \{ - \mu^{\tmop{ADMM}}_{\gamma}, 0 \}$}
  \For{$k = 0, 1, \ldots, K - 1$}{
    \textbf{Current $x$-update:}
    $x^{k +} = \left[ z^k - \tfrac{1}{\gamma} \nu_2^k \right]_+$,
    $r_x^k = \left[ \begin{array}{c}
      r_1^k\\
      r_2^k
    \end{array} \right] = \left[ \begin{array}{c}
      A z^k - b\\
      x^{k +} - z^k
    \end{array} \right]$; \textbf{stop} if $r_x^k = 0$\;
    \textbf{Probe step:}
    $c_{P_k} = \left[ \begin{array}{c}
      c_1^k\\
      c_2^k
    \end{array} \right] = \left[ \begin{array}{c}
      b - A z^k\\
      z^k
    \end{array} \right] + P_k r_x^k$\;
    \qquad $\begin{aligned}
      z^{k + 1 / 2} = & H_{\gamma}^{- 1} [\nu_2^k - A^{\top} \nu_1^k - q
      + \gamma A^{\top} (b - c_1^k) + \gamma c_2^k]\\
      \nu_1^{k + 1 / 2} = & \nu_1^k + \gamma (c_1^k + A z^{k + 1 / 2} -
      b)\\
      \nu_2^{k + 1 / 2} = & \nu_2^k + \gamma (c_2^k - z^{k + 1 / 2})\\
      x^{k + 1 / 2} = & \left[ z^{k + 1 / 2} - \tfrac{1}{\gamma}
      \nu_2^{k + 1 / 2} \right]_+, \quad
      r_x^{k + 1 / 2} = \left[ \begin{array}{c}
        A z^{k + 1 / 2} - b\\
        x^{k + 1 / 2} - z^{k + 1 / 2}
      \end{array} \right]
    \end{aligned}$\;
    \textbf{Preconditioner update:}
    $P_{k + 1} = \Pi_{\mathcal{P}} \big[ P_k + \tfrac{\eta}{\gamma}
    \tfrac{G^{k + 1} (r_x^k)^{\top}}{\| r_x^k \|^2} - \eta \beta (P_k - I)
    \big]$ with
    $G^{k + 1} = \left[ \begin{array}{c}
      r_1^{k + 1 / 2} - 2 \gamma A w^{k + 1}\\
      r_2^{k + 1 / 2} + 2 \gamma w^{k + 1}
    \end{array} \right]$,
    $w^{k + 1} = H_{\gamma}^{- 1} (A^{\top} r_1^{k + 1 / 2} -
    r_2^{k + 1 / 2})$\;
    {\color{blue}$\triangleright$ \textbf{Efficient variant:} use
    $G^{k + 1} = r_x^{k + 1 / 2}$, equivalently $w^{k + 1} = 0$.}\;
    \textbf{Null step:}
    $(z^{k + 1}, \nu^{k + 1}) = \arg \max \{ \mathcal{L}^{\tmop{QP}}_{\gamma}
    (x^+ (z, \nu), z, \nu) \mid (z, \nu) \in \{ (z^k, \nu^k),
    (z^{k + 1 / 2}, \nu^{k + 1 / 2}) \} \}$\;
  }
  \KwOutput{$(x^{K +}, z^K, q^K)$ with $x^{K +} = \left[ z^K -
  \tfrac{1}{\gamma} \nu_2^K \right]_+$, $q^K = \nu^K + \gamma r_x^K$}
\end{algorithm}

\begin{remark}[Envelope constants for QP]\label{rem:qp-envelope-constants}
  Suppose $Q \succ 0$, so that \Cref{ass:admm-quad} holds with $\mu_\psi =
  \lambda_{\min} (Q)$ and $L_\psi = \lambda_{\max} (Q)$: step 1 solves
  every $x$-subproblem in closed form.
  For the QP split, the constraint matrix $B =
  \bigl[ \begin{smallmatrix} A \\ - I \end{smallmatrix} \bigr]$ of
  problem~\eqref{eq:admm} satisfies $\| B \|^2 = \| A \|^2 + 1$ and
  $\lambda_{\min} (B B^{\top}) = 0$, so the constants of
  \Cref{lem:admm-regularity}
  are $L = \tfrac{\| A \|^2 + 1}{\lambda_{\min} (Q)}$ and $\mu = 0$.
  For $0 < \gamma < \tfrac{1}{L}$, the quadratic case of
  \Cref{thm:envelope-regularity} shows that
  $F^{\tmop{ADMM}}_{\gamma}$ is $\tfrac{1}{\gamma}$-smooth and convex on the
  whole dual space. In particular, $\mu^{\tmop{ADMM}}_{\gamma} = 0$, so the
  regularization term in the preconditioner update vanishes for QP.
\end{remark}

\begin{remark}[Per-iteration cost]
  Each iteration of \Cref{alg:os-admm-qp} makes two linear solves with the
  factorized matrix $H_{\gamma}$: one for the probe iterate $z^{k + 1 / 2}$
  and one for the vector $w^{k + 1}$ in the preconditioner update. All
  other operations are matrix-vector products and projections. Vanilla ADMM
  makes one solve per iteration, so online scaling doubles the number of
  solves per iteration.
\end{remark}

\appendixsubsection{Efficient OS-ADMM variant for QP}
\label{app:efficient-os-admm-qp}

Since \Cref{alg:os-admm-qp} requires two linear solves per iteration, we also test a more efficient variant that uses only one solve per iteration. The
variant approximates the hypergradient $G^{k + 1}$ in the preconditioner update by setting $w^{k + 1} = 0$,
and hence replaces the exact direction $G^{k + 1}$ by the probe residual $r_x^{k + 1 / 2}$.
This approximation removes the second
$H_{\gamma}$-solve, so each iteration has the same one-solve cost as
vanilla ADMM. We use residual feedback when online scaling is combined with
Fast-ADMM, RB-ADMM, and AADMM; the precise residual used by each variant is
described in \Cref{app:practical-os-admm-variants}.

The omitted correction has a controlled size in the stepsize regime of
\Cref{rem:qp-envelope-constants}. To see this, write $B =
\bigl[\begin{smallmatrix} A \\ -I \end{smallmatrix}\bigr]$ and $r \assign
r_x^{k + 1 / 2}$. Since $H_{\gamma} = Q + \gamma B^{\top} B$, the exact
direction satisfies
\[
  G^{k + 1} = (I - 2 \gamma B H_{\gamma}^{-1} B^{\top}) r, \qquad
  0 \preceq \gamma B H_{\gamma}^{-1} B^{\top}
  \preceq \tfrac{\gamma \|B\|^2}{\lambda_{\min} (Q) + \gamma \|B\|^2} I.
\]
For $0 < \gamma < \lambda_{\min} (Q) / \|B\|^2$, which is the condition in
the QP envelope analysis, these bounds give
\[
  \langle r, G^{k + 1} \rangle
  \geq \tfrac{\lambda_{\min} (Q) - \gamma \|B\|^2}
  {\lambda_{\min} (Q) + \gamma \|B\|^2} \|r\|^2 > 0,
  \qquad
  \|G^{k + 1} - r\|
  \leq \tfrac{2 \gamma \|B\|^2}
  {\lambda_{\min} (Q) + \gamma \|B\|^2} \|r\|.
\]
Thus the residual is positively aligned with the exact direction, and the
absolute approximation error vanishes with the fixed-point residual. The
approximation is most accurate when $\gamma \|B\|^2$ is small relative to
$\lambda_{\min} (Q)$. It is a computational surrogate rather than the exact
hypergradient, so the convergence guarantees for \Cref{alg:os-admm} do not
directly apply to this efficient variant.

\appendixsubsection{Practical OS-ADMM variants}
\label{app:practical-os-admm-variants}

All practical variants use the same efficient online-scaled ADMM step and
differ in the pair and penalty supplied to this step or in the update applied
afterward. \Cref{alg:practical-os-admm-step} states the shared step.
\Cref{tab:practical-os-admm-composition} summarizes how each method wraps it. 
The practical variants
commit the probe unconditionally, whereas the efficient OS-ADMM baseline
retains the null step in \Cref{alg:os-admm-qp}. 
All variants initialize
$(z^0,\nu^0)$ as in \Cref{alg:os-admm-qp} and set $P_0=I$. Holding $P_k=I$
recovers the underlying ADMM, Fast-ADMM, RB-ADMM, or AADMM step.
When the penalty $\gamma$ varies, $x^+(z,\nu;\gamma)$ and
$r_x(z,\nu;\gamma)$ denote the $x$-update and residual in
\Cref{alg:os-admm-qp} with displayed $\gamma$.

\begin{algorithm}[htbp]
  \DontPrintSemicolon
  \caption{Shared efficient online-scaled ADMM step for QP}
  \label{alg:practical-os-admm-step}
  \KwInit{input pair $(\bar z^k,\bar\nu^k)$; penalty $\gamma_k>0$;
  compact convex $\mathcal{P}\ni I$; preconditioner $P_k\in\mathcal{P}$;
  online update rule}
  $x^{k+}=x^+(\bar z^k,\bar\nu^k;\gamma_k)$,
  $r_x^k=r_x(\bar z^k,\bar\nu^k;\gamma_k)$\;
  $c_{P_k}^k=\left[\begin{array}{c}b-A\bar z^k\\\bar z^k\end{array}\right]
  +P_kr_x^k$\;
  Compute $(z^{k+1/2},\nu^{k+1/2},x^{k+1/2},r_x^{k+1/2})$ by the
  probe step of \Cref{alg:os-admm-qp} from $(\bar z^k,\bar\nu^k)$ with
  $\gamma=\gamma_k$ and $c_{P_k}=c_{P_k}^k$\;
  Update $P_{k+1}$ by the residual-feedback preconditioner rule of
  \Cref{alg:os-admm-qp} with $G^{k+1}=r_x^{k+1/2}$ and the chosen online
  update rule\;
  \KwOutput{$(z^{k+1/2},\nu^{k+1/2},P_{k+1})$}
\end{algorithm}

\begin{table}[htbp]
  \centering
  \caption{Composition of efficient OS-ADMM and its practical variants.}
  \label{tab:practical-os-admm-composition}
  \begin{tabularx}{\textwidth}{@{}l l l X@{}}
    \toprule
    Method & Input pair & Penalty & Update after the shared step \\
    \midrule
    Efficient OS-ADMM & $(z^k,\nu^k)$ & fixed $\gamma$ & Apply the null step \\
    OS-RB-ADMM & $(z^k,\nu^k)$ & $\gamma_k$ & Apply residual balancing \\
    OS-AADMM & $(z^k,\nu^k)$ & $\gamma_k$ & Apply the safeguarded spectral update \\
    OS-Fast-ADMM & $(\widehat z^k,\widehat\nu^k)$ & fixed $\gamma$ & Extrapolate or restart \\
    \bottomrule
  \end{tabularx}
\end{table}

\paragraph{Penalty-based variants.}
Since RB-ADMM and AADMM are both penalty-adaptive algorithms,
OS-RB-ADMM and OS-AADMM call \Cref{alg:practical-os-admm-step} with the
current pair $(z^k,\nu^k)$ and penalty $\gamma_k$. The probe residual supplies
the feedback for $P_{k+1}$, and the probe pair becomes
$(z^{k+1},\nu^{k+1})$. The two methods differ only in the subsequent update
of the penalty $\gamma_k$.

\textbf{OS-RB-ADMM.}
Residual balancing compares primal and dual residuals and changes the penalty
when their magnitudes are substantially different \cite{he2000alternating}.
For the diagonal preconditioners used in our experiments, we make this
comparison in the online-scaled metric:
\[
  R_{\tmop{pri}}^k \assign
  \|P_{k+1}^{1/2}r_x^{k+1/2}\|, \qquad
  R_{\tmop{dual}}^k \assign \gamma_k
  \|\left[\begin{array}{cc}0&I\end{array}\right]
  P_{k+1}^{-1/2}B(z^{k+1/2}-z^k)\|.
\]
The RB rule increases $\gamma_k$ when $R_{\tmop{pri}}^k$ dominates
$R_{\tmop{dual}}^k$, decreases it when the reverse inequality holds, and
otherwise leaves it unchanged.

\textbf{OS-AADMM.}
AADMM estimates the local curvatures of the two dual components from
differences between ADMM iterates, then uses the resulting spectral
Barzilai--Borwein estimates to update the penalty \cite{xu2017adaptive}. In
the online-scaled variant, the same safeguarded spectral rule is applied to
the iterates produced by \Cref{alg:practical-os-admm-step}. If both
curvature estimates pass the correlation safeguard, the new penalty is their
geometric mean. If only one estimate passes, that estimate sets the penalty,
and if neither passes, the penalty remains unchanged.

\paragraph{OS-Fast-ADMM.}
Fast-ADMM evaluates the ADMM step at an extrapolated pair and updates this pair
by Nesterov-type momentum with a restart rule \cite{goldstein2014fast}. Write
the extrapolated pair as $(\widehat z^k,\widehat\nu^k)$ and initialize it by
$(\widehat z^0,\widehat\nu^0)=(z^0,\nu^0)$. Let $\alpha_0=1$, $E^0=\infty$,
and choose a restart threshold $\eta_{\tmop{F}}\in(0,1)$. After the ADMM step
produces $(z^{k+1},\nu^{k+1})$, define the restart measure
\[
  E^{k+1}\assign \tfrac{1}{\gamma}
  \|\nu^{k+1}-\widehat\nu^k\|^2
  +\gamma\|B(z^{k+1}-\widehat z^k)\|^2.
\]
If $E^{k+1}<\eta_{\tmop{F}}E^k$, Fast-ADMM updates the momentum parameter and
extrapolates both components:
\begin{equation}
  \label{eq:fast-admm-extrapolation}
  \begin{aligned}
    \alpha_{k+1}&=\tfrac{1+\sqrt{1+4\alpha_k^2}}{2},\\
    (\widehat z^{k+1},\widehat\nu^{k+1})
    &=(z^{k+1},\nu^{k+1})
    +\tfrac{\alpha_k-1}{\alpha_{k+1}}
    (z^{k+1}-z^k,\nu^{k+1}-\nu^k).
  \end{aligned}
\end{equation}
Otherwise, it sets $\alpha_{k+1}=1$,
$(\widehat z^{k+1},\widehat\nu^{k+1})=(z^k,\nu^k)$, and
$E^{k+1}=E^k/\eta_{\tmop{F}}$. Online scaling changes only the ADMM step
evaluated at the extrapolated pair. The scaled step uses the affine point
$c_{P_k}$ from \Cref{alg:os-admm-qp}, with $(z^k,\nu^k)$ replaced by
$(\widehat z^k,\widehat\nu^k)$.
At iteration $k$, OS-Fast-ADMM calls \Cref{alg:practical-os-admm-step} with
input pair $(\widehat z^k,\widehat\nu^k)$ and fixed penalty $\gamma$, commits
the returned probe pair as $(z^{k+1},\nu^{k+1})$, and then applies the
extrapolation~\eqref{eq:fast-admm-extrapolation} or the restart rule.

\appendixsubsection{Details on QP experiments}\label{app:qp-benchmarks}

\paragraph{Synthetic benchmark classes.}
We adapt the following three QP classes from \cite{stellato2020osqp}. Each
class varies three structural parameters over a $3 \times 3 \times 3$ grid,
and every instance is generated with random seed zero.

\paragraph{Random sparse QPs.}
For $n$ variables, $m$ constraints, and nonzero density $d$, the problem is
\[
  \begin{aligned}
    \tmop{minimize}_{x \in \mathbb{R}^n} \quad &
    \tfrac{1}{2} x^{\top} (M M^{\top} + 10^{-2} I) x + q^{\top} x \\
    \text{subject to} \quad & A x \leq A \bar{x} + s .
  \end{aligned}
\]
The vectors $q, \bar{x} \in \mathbb{R}^n$ have independent standard Gaussian
entries, and the entries $s_i$ are independent uniform random variables on
$[0,1]$. The nonzero entries of $M \in \mathbb{R}^{n \times n}$ and $A \in
\mathbb{R}^{m \times n}$ are independent standard Gaussian random variables,
with independent supports of density $d$. The construction makes $\bar{x}$
feasible. We use the grid
\[
  n \in \{500,1000,1500\}, \qquad
  \tfrac{m}{n} \in \{2,5,10\}, \qquad
  d \in \{0.005,0.05,0.15\}.
\]

\paragraph{Portfolio QPs.}
For $n$ assets and $k$ factors, we use asset weights $w \in \mathbb{R}^n$ and
factor exposures $y \in \mathbb{R}^k$ to solve
\[
  \begin{aligned}
    \tmop{minimize}_{w,y} \quad &
    w^{\top} D w + \|y\|^2 - \mu^{\top} w \\
    \text{subject to} \quad & y = F^{\top} w, \qquad
    \boldsymbol{1}^{\top} w = 1, \qquad w \geq 0 .
  \end{aligned}
\]
This lifted formulation corresponds to the factor covariance model $F
F^{\top} + D$. The factor-loading matrix $F \in \mathbb{R}^{n \times k}$
has independent standard Gaussian nonzeros on a support of density $d$, the
diagonal entries satisfy $D_{ii} \sim \mathcal{U}(0,\sqrt{k})$, and $\mu \sim
\mathcal{N}(0,I_n)$. We use the grid
\[
  n \in \{500,2000,6000\}, \qquad
  \tfrac{k}{n} \in \{0.1,0.2,0.4\}, \qquad
  d \in \{0.3,0.5,0.8\}.
\]

\paragraph{Constrained optimal-control QPs.}
For state dimension $n_x$, input dimension $n_u$, and horizon length $N$, the
problem is
\[
  \begin{aligned}
    \tmop{minimize}_{x_0,\ldots,x_N,u_0,\ldots,u_{N-1}} \quad &
    x_N^{\top} Q_N x_N
    + \sum_{t=0}^{N-1} \left(x_t^{\top} Q x_t + u_t^{\top} R u_t\right) \\
    \text{subject to} \quad & x_{t+1} = A_{\tmop{dyn}} x_t
    + B_{\tmop{dyn}} u_t, \quad t=0,\ldots,N-1, \\
    & -\bar{x} \leq x_t \leq \bar{x}, \quad t=0,\ldots,N, \\
    & -\bar{u} \leq u_t \leq \bar{u}, \quad t=0,\ldots,N-1, \\
    & x_0 = x_{\tmop{init}} .
  \end{aligned}
\]
We generate $A_{\tmop{dyn}}$ from $I+0.1G$, where $G$ has independent
standard Gaussian entries, and rescale eigenvalues of modulus at least $0.99$
to obtain stable dynamics. The entries of $B_{\tmop{dyn}}$ are independent
standard Gaussian random variables. We set $Q = \tmop{diag}(r)$, where 70\%
of the entries $r_i$ are nonzero and are uniform on $[0,1]$, and set $R =
0.1I$. If $S$ is the solution of the discrete algebraic Riccati equation for
$(A_{\tmop{dyn}},B_{\tmop{dyn}},Q,R)$, then the terminal cost is $Q_N = S
S^{\top}$. Finally, $\bar{x}_i \sim \mathcal{U}(1,2)$, $\bar{u}_i \sim
\mathcal{U}(0,1)$, and $x_{\tmop{init}}$ is uniform on
$[-\bar{x}/2,\bar{x}/2]$. We use the grid
\[
  n_x \in \{10,20,50\}, \qquad
  N \in \{10,20,40\}, \qquad
  \tfrac{n_u}{n_x} \in \{0.2,0.5,1\}.
\]
    
\paragraph{Relative KKT residual.} 
The QP formulation we use in the code is
\begin{align}
  \tmop{minimize}\quad & \tfrac{1}{2} x^{\top} Q x + q^{\top} x \nonumber\\
  \text{subject to} \quad & l_{\tmop{con}} \leq A x \leq u_{\tmop{con}} \nonumber\\
  & l_x \leq x \leq u_x . \nonumber
\end{align}
The relative KKT residual is defined as follows:
\[{\mathrm{relKKT} (x, y) = \max \{ r_{\mathrm{prim}}^{\mathrm{rel}},
   r_{\mathrm{dual}}^{\mathrm{rel}}, r_{\mathrm{gap}}^{\mathrm{rel}} \},} \]
where
\[ {
   r_{\mathrm{prim}}^{\mathrm{rel}} = \tfrac{\| \bar{A} x - \Pi_{[\bar{l},
   \bar{u}]} (\bar{A} x) \|_{\infty}}{1 + \max \left\{ \| \bar{A}
   x\|_{\infty}, \hspace{0.17em} \| \Pi_{[\bar{l}, \bar{u}]} (\bar{A}
   x)\|_{\infty} \right\}}, \qquad
   r_{\mathrm{dual}}^{\mathrm{rel}} = \tfrac{\| Q x + q + A^{\top} \lambda_z +
   \lambda_w \|_{\infty}}{1 + \max \left\{ \|Q x\|_{\infty}, \hspace{0.17em}
   \|q\|_{\infty}, \hspace{0.17em} \|A^{\top} \lambda_z \|_{\infty},
   \hspace{0.17em} \| \lambda_w \|_{\infty} \right\}} .
   } \]
\[ r_{\mathrm{gap}}^{\mathrm{rel}} = \tfrac{| x^{\top} Q x + q^{\top} x +
   \bar{u}^{\top} y_+ + \bar{l}^{\top} y_- |}{1 + \max \left\{ \left|
   \frac{1}{2} x^{\top} Q x + q^{\top} x \right|, \left| - \frac{1}{2}
   x^{\top} Q x - \bar{u}^{\top} y_+ - \bar{l}^{\top} y_- \right| \right\}} .
\]

\end{document}